\def\qed{\hfill $\Box$}
\renewenvironment{proof}[1][\proofname]{\par
  \normalfont
  \topsep6\p@\@plus6\p@ \trivlist
  \item[\hskip\labelsep{\bfseries #1}\@addpunct{\bfseries.}]\ignorespaces
}{
  \endtrivlist
}
\renewcommand{\proofname}{Proof}
\begin{document}
\title{The Dantzig selector for diffusion processes with covariates}
\author{Kou Fujimori}
\author{Yoichi Nishiyama}
\affil{Waseda University}
\date{}
\maketitle
\theoremstyle{plain}
\newtheorem{defi}{Definition}[section]
\newtheorem{lem}[defi]{Lemma}
\newtheorem{thm}[defi]{Theorem}
\newtheorem{rem}[defi]{Remark}
\newtheorem{fact}[defi]{Fact}
\newtheorem{ex}[defi]{Example}
\newtheorem{prop}[defi]{Proposition}
\newtheorem{cor}[defi]{Corollary}
\newtheorem{condition}[defi]{Condition}
\newtheorem{assumption}[defi]{Assumption}
\newcommand{\sign}{\mathop{\rm sign}}
\newcommand{\conv}{\mathop{\rm conv}}
\newcommand{\argmax}{\mathop{\rm arg~max}\limits}
\newcommand{\argmin}{\mathop{\rm arg~min}\limits}
\newcommand{\argsup}{\mathop{\rm arg~sup}\limits}
\newcommand{\arginf}{\mathop{\rm arg~inf}\limits}
\begin{abstract}
The Dantzig selector for a special parametric model of diffusion processes is studied in this paper. 
In our model, the diffusion coefficient is given as the exponential of the linear combination of 
other processes which are regarded as covariates. 
We propose an estimation procedure which is an adaptation of the Dantzig selector for 
linear regression models and prove the 
$l_q$ consistency of the estimator for all $q \in [1,\infty]$.
\end{abstract}
\section{Introduction}
The purpose of this paper is to discuss a parametric estimation problem in a high dimensional and 
sparse setting 
for a special parametric model of diffusion processes. 
We consider the stochastic process which is a solution to the stochastic differential equation given by
\begin{equation}\label{model}
X_t = X_0 + \int_0^t b(X_s) ds + \int_0^t \exp(\theta^T Z_s) dW_s, 
\end{equation}
where $\{W_t\}_{t \geq 0}$ is a standard Brownian motion, $b(\cdot)$ is a nuisance drift function, 
$\{Z_t\}_{t \geq 0} = \{(Z_t^1,Z_t^2,\ldots,Z_t^p)\}_{t \geq 0}$ is a uniformly bounded $p$ dimensional 
continuous process, which is regarded as a covariate vector, and 
$\theta$ is an unknown parameter of interest. 
We observe the process $\{X_t\}_{t \geq 0}$ at $n+1$ equidistant time points 
$0=:t_0^n < t_1^n < \ldots < t_n^n :=1$, where $t_k^n = k/n$ for $k = 0,1,\ldots,n$. 
Assume that $p = p_n \gg n$ and the number of non-zero components $S$ in the true value 
$\theta_0$ is relatively small. 
In this high dimensional and sparse setting, we consider the estimation problem of $\theta_0$. 
The covariate processes $\{Z_t^i\}_{t \geq 0}$, $i = 1, 2, \ldots, p_n$, are, for example, 
some functionals $\{\phi_i(X_t^i)\}_{t \geq 0}$ of solutions to other stochastic differential equations 
$\{X_t^i\}_{t \geq 0}$, where $\phi_i$'s are uniformly bounded smooth functions or random variables 
which do not depend on $t$. 

The parametric estimation problems in the high dimensional and sparse setting for various models 
have been investigated in contemporary statistics. 
For example, the regularized methods such as LASSO (Tibshirani (1996), Tibshirani (1997), Huang {\it et al}.\ (2013), Bradic {\it et al}.\ (2011), among others) and SCAD (Fan and Li (2002) and Bradic {\it et al}.\ (2011)) for 
regression models including Cox's proportional hazards model are studied by many researchers. 
The application of a relatively new method called the Dantzig selector, 
which is proposed by Candes and Tao (2007) for 
the estimation procedure of linear regression models, is also studied by Antoniadis {\it et al.}\ (2010) and 
Fujimori and Nishiyama (2017) for Cox's proportional hazards model. 
We will apply the Dantzig selector to our newly proposed model 
$(\ref{model})$ for which the procedure works well, 
and prove the $l_q$ consistency of the estimator for all $q \in [1,\infty]$. 
Our estimation procedure is based on the quasi-likelihood method for discretely observed data 
which has been studied intensively in low-dimensional cases, for example, 
by Yoshida (1992), Genon-Catalot and Jacod (1993), and Kessler (1997). 
Especially, we focus on the estimation problem of diffusion coefficients
for high-frequency observed data on a 
fixed time interval, which can be seen in Genon-Catalot and Jacod (1993). 
We consider the estimation problem in a high-dimensional and sparse setting, although all of their results are concerned with low-dimensional cases.

This paper is organized as follows. 
The settings for the model, some regularity conditions, and the estimation procedure 
are given in Section 2. 
In Section 3, we state our main results. 
The proofs are presented in Section 4. 
Our methods of proofs are similar to Huang {\it et al}.\ (2013) who proved the 
consistency of LASSO estimator for Cox's proportional hazards model 
and to Fujimori and Nishiyama (2017) who dealt with the Dantzig selector for the proportional hazards model. 

Throughout this paper, for every $q \in [1,\infty]$, we denote by $\|\cdot\|_q$ the $l_q$-norm of 
$p$ dimensional vector, which is defined as follows:
\begin{align*}
\|v\|_q &= \left(\sum \limits_{j=1}^p |v_j|^q \right)^{\frac{1}{q}},\quad q < \infty ; \\
\|v\|_{\infty} &= \sup \limits_{1 \leq j \leq p} |v_j|.
\end{align*}
Moreover, for a $m \times n$ matrix $A$, where $m,\ n \in \mathbb{N}$, we define $\|A\|_{\infty}$ by
\[\|A\|_{\infty} := \sup \limits_{1 \leq i \leq m} \sup \limits_{1 \leq j \leq n} |A_i^j|,\]
where $A_i^j$ denotes the $(i,j)$-component of the matrix $A$.
\section{Preliminaries}
Let $\{W_t\}_{t \geq 0}$ be a standard Brownian motion defined on a probability space 
$(\Omega,\mathcal{F},P)$, and $\{Z_t\}_{t \geq 0} := \{(Z_t^1,Z_t^2,\ldots,Z_t^p)\}_{t \geq 0}$
be a uniformly bounded $p$ dimensional continuous process. 
We introduce the filtration $\{\mathcal{F}_t\}_{t \geq 0}$ defined by  
\[\mathcal{F}_t := \mathcal{F}_0 \lor \sigma(W_s, Z_s : s \in [0, t]), \quad t \geq 0,\]
where $\mathcal{F}_0$ is a $\sigma$-field independent of $\{W_t\}_{t \geq 0}$, and $\{Z_t\}_{t \geq 0}$. 
Let us consider the $1$ dimensional stochastic differential equation (\ref{model}): 
\[X_t = X_0 + \int_0^t b(X_s) ds + \int_0^t \exp(\theta^T Z_s) dW_s,\]
where $x \mapsto b(x)$ is a nuisance drift function which satisfies appropriate regularity conditions 
presented later, 
and $\theta \in \mathbb{R}^{p_n}$ is an unknown parameter of interest. 
We observe the process $\{X_t\}_{t \geq 0}$ at $n+1$ discrete time points 
$0 =:t_0^n < t_1^n < t_2^n < \cdots < t_n^n:=1$, where $t_k^n := k/n$. 
Assume that $p = p_n \gg n$ and the number of non-zero components $S$ in the true value
$\theta_0$ is a fixed constant. 
In this high dimensional and sparse setting, we consider the estimation problem of $\theta_0$.
The quasi-likelihood function $L_n(b;\theta)$ is given by 
\[L_n(b;\theta) =\prod_{k=1}^n \frac{1}{\sqrt{2\pi \exp(2\theta^T Z_{t_{k-1}^n})\Delta_n}} 
\exp \left(- \frac{|X_{t_k^n}-X_{t_{k-1}^n}-b(X_{t_{k-1}^n})\Delta_n|^2}{2 \exp(2 \theta^T Z_{t_{k-1}^n}) \Delta_n}\right),\]
where $\Delta_n := t_k^n - t_{k-1}^n = 1/n$.
Put $l_n(b;\theta) := \log L_n(b;\theta)$, and define the $\mathbb{R}^{p_n}$-valued function 
$\psi_n(b;\theta) = (\psi_n^1(b;\theta), \psi_n^2(b;\theta),\ldots,\psi_n^{p_n}(b;\theta))$ by 
\begin{eqnarray*}
\psi_n(b;\theta) 
&:=& \frac{1}{n} \dot {l}_n(b;\theta) \\
&=& \frac{1}{n \Delta_n} \sum_{k=1}^n Z_{t_{k-1}^n} \exp(-2 \theta^T Z_{t_{k-1}^n})
|X_{t_k^n} - X_{t_{k-1}^n}-b(X_{t_{k-1}^n}) \Delta_n|^2 \\
&& \qquad \qquad 
- Z_{t_{k-1}^n}\Delta_n. 
\end{eqnarray*}
Moreover, we define the $p_n \times p_n$ matrix-valued function $V_n(b;\theta)$ by 
\begin{align*}
V_n(b;\theta)
&:= -\frac{1}{n} \ddot l_n(b;\theta) \\
&=  \frac{2}{n \Delta_n} \sum_{k=1}^n Z_{t_{k-1}^n} Z_{t_{k-1}^n}^T \exp(-2 \theta^T Z_{t_{k-1}^n})
|X_{t_k^n} - X_{t_{k-1}^n}-b(X_{t_{k-1}^n}) \Delta_n|^2.
\end{align*}
Note that $V_n(b;\theta)$ is a nonnegative definite matrix. 
Hereafter, we assume the following conditions.
\begin{assumption}\label{regularity}
\begin{description}
\item[$(i)$]
There exists a constant $\tilde{L} > 0$ such that for all $x,y \in \mathbb{R}$, 
\[|b(x) - b(y)| \leq \tilde{L} |x-y|.\]
\item[$(ii)$]
There exists a constant $C>0$ such that
\[\sup_{t \in [0,1]} \sup_{1 \leq i \leq \infty} |Z_t^i| \leq C.\]
\item[$(iii)$]
For every $r \geq 1$, it holds that
\[ \sup_{t \in [0, 1]} E \left[ |X_t|^r\right] < \infty.\]
\item[$(iv)$]
For every $r \in \mathbb{N}$, there exists a constant $\tilde{C}_r$ such that for every $n \in \mathbb{N}$, $i \in \{1,2,\ldots,p_n\}$ and $k = 1,2,\ldots,n$,
\[E\left[\sup_{s \in [{t_{k-1}^n}, {t_k^n}]} | X_s - X_{{t_{k-1}^n}}|^r \right] \leq \tilde{C}_r \Delta_n^{\frac{r}{2}},\]
\[E\left[\sup_{s \in [{t_{k-1}^n}, {t_k^n}]} | Z^i_s - Z^i_{{t_{k-1}^n}}|^r \right] \leq \tilde{C}_r \Delta_n^{\frac{r}{2}}.\]
\end{description}
\end{assumption}
Assumption $(iv)$ is satisfied if $Z_t^i$, $i = 1, 2, \ldots,p_n$, are appropriate transformation of stochastic processes
which are solutions to other SDEs as mentioned in Introduction. 
In Section $4$, we will show that $b(\cdot)$ can be ignored under Assumption \ref{regularity}. 
We thus define the estimator $\hat{\theta}_n$ by the Dantzig selector as
\[\hat{\theta}_n := \argmin_{\theta \in \mathcal{C}_n} \|\theta\|_1, \quad
\mathcal{C}_n := \{\theta \in \mathbb{R}^{p_n} : \|\psi_n(0;\theta)\|_{\infty} \leq \gamma\},
\]
where $\gamma$ is a tuning parameter by setting $b = 0$. 

Define the $p_n \times p_n$ matrix $J_n$ by 
\[J_n := \frac{2}{n} \sum_{k=1}^n Z_{t_{k-1}^n}Z_{t_{k-1}^n}^T,\]
which will be proved to approximate $V_n(0;\theta_0)$ in Section $4$. 
We introduce the following factors $(A),\ (B)$ and $(C)$ in order to prove the consistency of 
the estimator $\hat{\theta}_n$.
\begin{defi}\label{factors}
For every index set $T \subset \{1,\ 2,\ \cdots,\ p_n\}$ and $h \in \mathbb{R}^{p_n}$, 
$h_T$ is a $\mathbb{R}^{|T|}$ dimensional sub-vector of $h$ constructed by extracting the components of $h$ corresponding to the indices in $T$. Define the set $C_T$ by 
\[C_T := \{h \in \mathbb{R}^{p_n} : \|h_{T^c}\|_1 \leq \|h_{T}\|_1\} .\]
We introduce the following three factors.
\begin{description}
\item[$(A)$ Compatibility factor]
\[\kappa(T_0;J_n) := \inf \limits_{0 \not = h \in C_{T_0}} \frac{S^{\frac{1}{2}} (h^T J_n  h)^{\frac{1}{2}}}{\|h_{T_0}\|_1}.\]
\item[$(B)$ Weak cone invertibility factor]
\[F_q(T_0;J_n) :=  \inf \limits_{0 \not = h \in C_{T_0}} \frac{S^{\frac{1}{q}} (h^T J_n h)^{\frac{1}{2}}}{\|h_{T_0}\|_1 \|h\|_q},\quad q \in [1,\infty),\]
\[F_\infty(T_0;J_n) := \inf \limits_{0 \not = h \in C_{T_0}} \frac{(h^T J_n h)^{\frac{1}{2}}}{\|h\|_\infty}.\]
\item[$(C)$ Restricted eigenvalue]
\[RE(T_0;J_n) :=  \inf \limits_{0 \not = h \in C_{T_0}} \frac{ (h^T J_n h)^{\frac{1}{2}}}{\|h\|_2}.\]
\end{description}
\end{defi}
We assume the next condition to derive our main results. 
\begin{assumption}\label{matrix}
For every $\epsilon>0$, there exist $\delta >0$ and $n_0 \in \mathbb{N}$ 
such that for all $n \geq n_0$
\[P(\kappa(T_0 ; J_n) > \delta) \geq 1- \epsilon.\]
\end{assumption}
Noting that $\|h_{T_0}\|_1^q \geq \|h_{T_0}\|_q^q$ for all $q \geq 1$, 
we can see that 
$\kappa(T_0;J_n)  \leq 2\sqrt{S} RE(T_0;J_n)$, and 
$\kappa(T_0;J_n) \leq F_q(T_0;J_n)$. 
So under Assumption \ref{matrix}, $RE(T_0;J_n)$ and $F_q(T_0;J_n)$ also satisfy 
the corresponding conditions.
\section{The $l_q$ consistency of the estimator}
The following theorems are our main results. 
The proofs are provided in Section $4$.
Hereafter, we assume that $\gamma_n$ and $p_n$ satisfy that 
\begin{eqnarray}
\gamma_n &=& K_0 \Delta_n^{\frac{1}{2}-\alpha}, \label{gamma}\\
\log(1+p_n) &=& O(n^\zeta), \label{p_n}
\end{eqnarray}
where $K_0>0$, $0 < \alpha < 1/2$, $0 < \zeta < 2\alpha$ are some constants.
\begin{thm}\label{re}
Suppose that $\gamma_n$ and $p_n$ satisfy (\ref{gamma}) and (\ref{p_n}) respectively.
Under Assumptions \ref{regularity} and \ref{matrix}, the following (i) and (ii) hold true 
for some positive constants $K_2$ and $K_3$.
\begin{description}
\item{$(i)$}
It holds that
\[\lim_{n \rightarrow \infty} 
P \left(\|\hat{\theta}_n - \theta_0\|_2^2 \geq 
\frac{K_2 {\gamma_n} + K_3 \epsilon_n}{RE^2(T_0;J_n)} \right) = 0.\]
In particular, it holds that $\|\hat{\theta}_n - \theta_0\|_2 \rightarrow^p 0$.
\item{$(ii)$}
It holds that
\[\lim_{n \rightarrow \infty} 
P \left(\|\hat{\theta}_n - \theta_0\|_\infty^2 \geq 
\frac{K_2 {\gamma_n} + K_3 \epsilon_n}{F_\infty^2(T_0;J_n)} \right) = 0.\]
In particular, it holds that $\|\hat{\theta}_n - \theta_0\|_\infty \rightarrow^p 0$.
\end{description}
\end{thm}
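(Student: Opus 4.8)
The plan is to follow the now-standard Dantzig-selector argument, adapted to the quasi-likelihood score $\psi_n(0;\cdot)$ of the diffusion model. First I would establish a \emph{basic inequality} on a high-probability event. Since $\hat\theta_n$ minimizes $\|\theta\|_1$ over $\mathcal C_n$, and since one must first check that $\theta_0$ itself lies in $\mathcal C_n$ with probability tending to $1$, I would prove a deviation bound $\|\psi_n(0;\theta_0)\|_\infty \le \gamma_n$ w.h.p. This is where the martingale structure enters: writing $X_{t_k^n}-X_{t_{k-1}^n} = \int b(X_s)ds + \int \exp(\theta_0^T Z_s)dW_s$, expanding $|X_{t_k^n}-X_{t_{k-1}^n}|^2$, and using Assumption \ref{regularity}(iii)--(iv) to control the drift part and the in-cell fluctuations of $Z$, one reduces $\psi_n^i(0;\theta_0)$ to a sum of martingale differences of the form $Z^i_{t_{k-1}^n}\exp(-2\theta_0^TZ_{t_{k-1}^n})\big((\Delta_k W)^2\exp(2\theta_0^TZ_{t_{k-1}^n}) - \Delta_n\big)/(n\Delta_n)$ plus a negligible remainder; a Bernstein-type exponential inequality for martingales together with the uniform boundedness of $Z$ (Assumption \ref{regularity}(ii)) and the union bound over $p_n$ coordinates — here is where $\log(1+p_n)=O(n^\zeta)$ with $\zeta<2\alpha$ and $\gamma_n = K_0\Delta_n^{1/2-\alpha}$ are calibrated — gives $P(\|\psi_n(0;\theta_0)\|_\infty > \gamma_n)\to 0$. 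I would package the remainder terms (drift contribution, $Z$-discretization error, and the gap between $V_n(0;\theta_0)$ and $J_n$) into a single quantity $\epsilon_n\to^p 0$, presumably already named $\epsilon_n$ in Section 4.

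Second, on the event $\{\theta_0\in\mathcal C_n\}\cap\{\hat\theta_n\in\mathcal C_n\}$ I would set $h:=\hat\theta_n-\theta_0$ and run the cone argument: from $\|\hat\theta_n\|_1\le\|\theta_0\|_1$ and the decomposition over $T_0=\mathrm{supp}(\theta_0)$ one gets $\|h_{T_0^c}\|_1\le\|h_{T_0}\|_1$, i.e. $h\in C_{T_0}$, so the factors $RE(T_0;J_n)$ and $F_\infty(T_0;J_n)$ apply. Next, from $\theta_0,\hat\theta_n\in\mathcal C_n$ and the triangle inequality, $\|\psi_n(0;\hat\theta_n)-\psi_n(0;\theta_0)\|_\infty\le 2\gamma_n$. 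The crucial analytic step is a second-order Taylor expansion of $\psi_n(0;\cdot)$ around $\theta_0$: $\psi_n(0;\hat\theta_n)-\psi_n(0;\theta_0) = -\int_0^1 V_n(0;\theta_0+th)\,dt\,h$, so that $h^T\big(\psi_n(0;\theta_0)-\psi_n(0;\hat\theta_n)\big) = h^T\bar V_n h$ with $\bar V_n:=\int_0^1 V_n(0;\theta_0+th)dt$ nonnegative definite. Combining Hölder's inequality $|h^T(\psi_n(0;\hat\theta_n)-\psi_n(0;\theta_0))|\le\|h\|_1\|\psi_n(0;\hat\theta_n)-\psi_n(0;\theta_0)\|_\infty\le 2\gamma_n\|h\|_1\le 4\gamma_n\|h_{T_0}\|_1$ yields $h^T\bar V_n h \le 4\gamma_n\|h_{T_0}\|_1$.

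Third, I must transfer this from $\bar V_n$ to $J_n$, which is the main obstacle. The difference $h^T(\bar V_n - J_n)h$ splits into (a) $h^T(V_n(0;\theta_0)-J_n)h$, controlled by $\|V_n(0;\theta_0)-J_n\|_\infty\|h\|_1^2$, and (b) the curvature term $h^T(\bar V_n - V_n(0;\theta_0))h$ coming from evaluating $V_n$ along the segment rather than at $\theta_0$; since $V_n(0;\theta)$ depends on $\theta$ only through the scalars $\exp(-2\theta^TZ_{t_{k-1}^n})$ and $|\theta^TZ_t|\le C\|\theta\|_1$ is bounded on the relevant region once we know $\|h\|_1$ is small, a Lipschitz estimate in $\theta$ gives a bound of order (something like) $\|h\|_1\cdot\|V_n(0;\theta_0)\|_\infty\cdot\|h\|_1^2$, which is higher order. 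Both contributions are then absorbed using $\|h\|_1 \le 2\|h_{T_0}\|_1 \le 2\sqrt{S}\,\|h\|_2$ and the restricted-eigenvalue lower bound $h^TJ_nh\ge RE^2(T_0;J_n)\|h\|_2^2$; after rearranging one obtains $RE^2(T_0;J_n)\|h\|_2^2 \le (K_2\gamma_n + K_3\epsilon_n)$, up to the constant from $\|h_{T_0}\|_1\le\sqrt S\|h\|_2$, which is exactly statement (i) after dividing through. There is a mild circularity — the curvature bound needs $\|h\|_1$ small, while smallness of $h$ is what we are proving — resolved in the usual way by a continuity/bootstrap argument or by first deriving a crude $l_1$ bound; I would handle it by noting $h\in C_{T_0}$ makes everything scale-homogeneous so one can argue on $\{\|h\|_2 \le$ const$\}$ and then improve. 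For part (ii) the identical chain of inequalities is run with the weak cone invertibility factor $F_\infty(T_0;J_n)$ in place of $RE$: from $h^TJ_nh\le K_2\gamma_n+K_3\epsilon_n$ and $h^TJ_nh\ge F_\infty^2(T_0;J_n)\|h\|_\infty^2$ (the definition of $F_\infty$ on the cone) one gets $\|h\|_\infty^2\le (K_2\gamma_n+K_3\epsilon_n)/F_\infty^2(T_0;J_n)$. Finally, the "in particular" conclusions follow because $\gamma_n\to 0$, $\epsilon_n\to^p 0$, and Assumption \ref{matrix} (together with the remarks after it, which pass the compatibility bound to $RE$ and $F_\infty$) keeps the denominators bounded away from $0$ with probability approaching $1$.
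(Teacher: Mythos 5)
Your overall architecture matches the paper's: a deviation bound $P(\|\psi_n(0;\theta_0)\|_\infty>\gamma_n)\to0$ via martingale decomposition and Bernstein's inequality with a union bound calibrated by (\ref{gamma})--(\ref{p_n}) (the paper's Lemmas \ref{A}--\ref{grad}), the cone membership $h\in C_{T_0}$, the H\"older bound $|h^T(\psi_n(0;\hat\theta_n)-\psi_n(0;\theta_0))|\le2\gamma_n\|h\|_1$, the transfer from $V_n(0;\theta_0)$ to $J_n$ via $\epsilon_n=\|V_n(0;\theta_0)-J_n\|_\infty$ (Lemma \ref{app}), and finally the definitions of $RE$ and $F_\infty$ on the cone. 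The one place you diverge, and where your write-up has a real soft spot, is the quadratic lower bound. You take the mean-value route $h^T\bar V_n h\le 2\gamma_n\|h\|_1$ with $\bar V_n=\int_0^1V_n(0;\theta_0+th)\,dt$, and then must compare $\bar V_n$ with $V_n(0;\theta_0)$; you correctly observe this seems to need $\|h\|_1$ small and propose a bootstrap. No bootstrap is needed: since $\theta_0\in\mathcal C_n$ on the good event and $\hat\theta_n$ is the $l_1$-minimizer over $\mathcal C_n$, one has the \emph{a priori} bound $\|h\|_1\le\|\hat\theta_n\|_1+\|\theta_0\|_1\le2\|\theta_0\|_1$, so $h^TZ_{t_{k-1}^n}$ stays in the fixed compact interval $I=[-2C\|\theta_0\|_1,2C\|\theta_0\|_1]$. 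The paper (Lemma \ref{pollard}) then writes the score difference in closed form, $h^T[\psi_n(0;\hat\theta_n)-\psi_n(0;\theta_0)]=\frac{1}{n\Delta_n}\sum_k h^TZ_{t_{k-1}^n}\exp(-2\theta_0^TZ_{t_{k-1}^n})|X_{t_k^n}-X_{t_{k-1}^n}|^2\{\exp(2h^TZ_{t_{k-1}^n})-1\}$, and applies the elementary inequality $x(e^{2x}-1)\ge\nu x^2$ with $\nu=\min_{x\in I}(e^{2x}-1)/x>0$ to get $h^TV_n(0;\theta_0)h\le\frac2\nu h^T[\psi_n(0;\hat\theta_n)-\psi_n(0;\theta_0)]$ directly at $\theta_0$; your integrated-Hessian version would work the same way, since $\int_0^1e^{-2tx}dt$ is bounded below on $I$, once you use the free $l_1$ bound. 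A second, minor discrepancy: the paper then bounds $2\gamma_n\|h\|_1\le4\gamma_n\|\theta_0\|_1$ by a constant (giving $h^TJ_nh\le K_2\gamma_n+K_3\epsilon_n$ with $K_3=4\|\theta_0\|_1^2$), whereas you keep $\|h\|_1\le2\|h_{T_0}\|_1\le2\sqrt S\|h\|_2$; your version yields a bound of the form $\|h\|_2\lesssim\gamma_n/RE^2$ rather than the stated $\|h\|_2^2\le(K_2\gamma_n+K_3\epsilon_n)/RE^2$ --- both imply the theorem, but the stated form is the cruder one. With the a priori $l_1$ bound inserted in place of the bootstrap, your proof closes.
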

\begin{thm}\label{comp}
Under the same assumption as Theorem \ref{re}, the following $(i)$ and (ii) hold true for a positive constant 
$K_4$.
\begin{description}
\item[$(i)$]
It holds that 
\[\lim_{n \rightarrow \infty} P \left(\|\hat{\theta}_n - \theta_0\|_1 \geq 
\frac{4K_4 S \gamma_n}{\kappa^2(T_0;J_n)-4S\epsilon_n}
\right) = 0.\]
In particular, it holds that $\|\hat{\theta}_n - \theta_0\|_1 \rightarrow^p 0$.
\item[$(ii)$]
It holds for every $q \in (1,\infty)$ that
\[
\lim_{n \rightarrow \infty} P \left(\|\hat{\theta}_n - \theta_0\|_q \geq 
\xi_{n,q}
\right) = 0,
\]
where 
\[\xi_{n,q} 
:= 
\frac{2S^{\frac{1}{q}}\epsilon_n}{F_q(T_0;J_n)} \cdot \frac{2K_4 S \gamma_n}{\kappa^2(T_0;J_n)-2S\epsilon_n} +
\frac{2K_4 S^{\frac{1}{q}}\gamma_n}{F_q(T_0;J_n)}.\]
In particular, it holds for all $q \in (1,\infty)$ that $\|\hat{\theta}_n - \theta_0\|_q \rightarrow^p 0$.
\end{description}
\end{thm}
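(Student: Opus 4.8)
The plan is to run the standard Dantzig-selector argument, with the quasi-score $\psi_n(0;\cdot)$ playing the role of the (negative) gradient of a concave quasi-contrast and $J_n$ that of a Gram matrix, and to relegate the model-specific probabilistic estimates to Section~4. Write $h:=\hat\theta_n-\theta_0$ and $T_0:=\{j:\theta_{0,j}\neq0\}$, so $|T_0|=S$. The first ingredient is \emph{feasibility of $\theta_0$}, i.e.\ $P(\theta_0\in\mathcal{C}_n)=P(\|\psi_n(0;\theta_0)\|_\infty\le\gamma_n)\to1$. One decomposes $\psi_n(0;\theta_0)=\psi_n(b;\theta_0)+\{\psi_n(0;\theta_0)-\psi_n(b;\theta_0)\}$, checks that the drift correction is negligible by expanding $|X_{t_k^n}-X_{t_{k-1}^n}|^2-|X_{t_k^n}-X_{t_{k-1}^n}-b(X_{t_{k-1}^n})\Delta_n|^2$ and using Assumption \ref{regularity}, and treats $\psi_n(b;\theta_0)$ as a sum of conditionally almost mean-zero increments, because $\exp(-2\theta_0^TZ_{t_{k-1}^n})|X_{t_k^n}-X_{t_{k-1}^n}-b(X_{t_{k-1}^n})\Delta_n|^2/\Delta_n$ has $\mathcal{F}_{t_{k-1}^n}$-conditional mean close to $1$; a maximal inequality together with the moment bounds of Assumption \ref{regularity}(iii),(iv) controls $\|\psi_n(b;\theta_0)\|_\infty$, and the choices $\gamma_n=K_0\Delta_n^{1/2-\alpha}$, $\log(1+p_n)=O(n^\zeta)$ with $\zeta<2\alpha$ are exactly what make the resulting bound $o_P(\gamma_n)$. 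On $\{\theta_0\in\mathcal{C}_n\}$, minimality of $\|\hat\theta_n\|_1$ over $\mathcal{C}_n$ and splitting the $l_1$-norm along $T_0$ and $T_0^c$ give $\|h_{T_0^c}\|_1\le\|h_{T_0}\|_1$, hence $h\in C_{T_0}$ and $\|h\|_1\le2\|h_{T_0}\|_1$.

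Next I would derive a \emph{quadratic inequality}. Since $V_n(0;\cdot)$ is nonnegative definite, $l_n(0;\cdot)$ is concave, and because the Jacobian in $\theta$ of $\psi_n(0;\cdot)$ equals $-V_n(0;\cdot)$, the integral form of the mean-value theorem gives $\psi_n(0;\hat\theta_n)-\psi_n(0;\theta_0)=-\bar{V}_nh$ with $\bar{V}_n:=\int_0^1 V_n(0;\theta_0+th)\,dt$. As $\theta_0,\hat\theta_n\in\mathcal{C}_n$, Hölder's inequality $|h^Tv|\le\|h\|_1\|v\|_\infty$ then yields
\[
h^T\bar{V}_nh=h^T\{\psi_n(0;\theta_0)-\psi_n(0;\hat\theta_n)\}\le 2\gamma_n\|h\|_1 .
\]
I would then replace $\bar{V}_n$ by $J_n$: a Section~4 estimate gives $\|\bar{V}_n-J_n\|_\infty\le\epsilon_n$ with probability tending to one, $\epsilon_n\to0$. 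This is where the rest of the probability work sits --- first $\|V_n(0;\theta_0)-J_n\|_\infty\to^p0$ (again because $\exp(-2\theta_0^TZ_{t_{k-1}^n})|X_{t_k^n}-X_{t_{k-1}^n}|^2/\Delta_n$ averages to $1$, uniformly over the $p_n^2$ entries), then $\|V_n(0;\theta)-V_n(0;\theta_0)\|_\infty$ small for $\theta$ on the segment $[\theta_0,\hat\theta_n]$, using Lipschitz continuity of $z\mapsto\exp(-2\theta^Tz)$ on the bounded range from Assumption \ref{regularity}(ii) together with the a priori bound $\|h\|_1\le\|\hat\theta_n\|_1+\|\theta_0\|_1\le2\|\theta_0\|_1$ and, if needed, the consistency already supplied by Theorem \ref{re}. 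Combining, with probability tending to one,
\[
h^TJ_nh\le 2\gamma_n\|h\|_1+\epsilon_n\|h\|_1^2 .
\]

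For (i), I use the compatibility factor and the cone: $h^TJ_nh\ge S^{-1}\kappa^2(T_0;J_n)\|h_{T_0}\|_1^2\ge(4S)^{-1}\kappa^2(T_0;J_n)\|h\|_1^2$. Substituting and dividing by $\|h\|_1$ (the case $h=0$ being trivial) gives $\bigl((4S)^{-1}\kappa^2(T_0;J_n)-\epsilon_n\bigr)\|h\|_1\le2\gamma_n$; on the event $\{\kappa^2(T_0;J_n)>4S\epsilon_n\}$ --- whose probability tends to one because $\epsilon_n\to0$, $S$ is fixed, and $\kappa(T_0;J_n)$ is bounded away from $0$ with probability close to one by Assumption \ref{matrix} --- this is the claimed bound $\|h\|_1\le 4K_4S\gamma_n/(\kappa^2(T_0;J_n)-4S\epsilon_n)$, which tends to $0$ in probability since $\gamma_n\to0$. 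For (ii), $q\in(1,\infty)$, I replace $\kappa$ by the weak cone invertibility factor, giving $h^TJ_nh\ge S^{-2/q}F_q^2(T_0;J_n)\|h_{T_0}\|_1^2\|h\|_q^2$; combining this with the quadratic inequality, with $\|h\|_1\le2\|h_{T_0}\|_1$, and with the $l_1$-bound of (i) inserted into the error term $\epsilon_n\|h\|_1^2$, one rearranges --- as in Huang {\it et al}.\ (2013) and Fujimori and Nishiyama (2017) --- to obtain $\|\hat\theta_n-\theta_0\|_q\le\xi_{n,q}$ with probability tending to one. Since $\kappa(T_0;J_n)\le F_q(T_0;J_n)$, both $F_q(T_0;J_n)$ and $\kappa^2(T_0;J_n)-2S\epsilon_n$ stay bounded below in probability, so $\xi_{n,q}\to0$ as $\gamma_n,\epsilon_n\to0$ with $S$ fixed, whence $\|\hat\theta_n-\theta_0\|_q\to^p0$.

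The main obstacle is the pair of probability lemmas behind the feasibility of $\theta_0$ and the comparison of $\bar{V}_n$ with $J_n$; the remainder is routine Dantzig-selector bookkeeping. The delicate point in both is to handle simultaneously (a) the substitution $b\equiv0$ in the quasi-likelihood, (b) the concentration of the normalized squared increments around their $\mathcal{F}_{t_{k-1}^n}$-conditional means in the high-frequency regime, and (c) uniformity over the $p_n$ (respectively $p_n^2$) coordinates --- which is precisely what forces $\gamma_n$ to be of order $\Delta_n^{1/2-\alpha}$ with $\zeta<2\alpha$. Within the comparison of $\bar{V}_n$ with $J_n$, the subtle part is taming the nonlinearity of $\psi_n$ in $\theta$ through the exponential link, i.e.\ bounding $\|V_n(0;\theta)-V_n(0;\theta_0)\|_\infty$ for $\theta$ near $\theta_0$, without a circular dependence on the very estimate being proved; this is where the crude a priori bound $\|\hat\theta_n\|_1\le\|\theta_0\|_1$ and Theorem \ref{re} are used.
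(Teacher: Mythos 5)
Your bookkeeping in (i) and (ii) — the cone membership $h\in C_{T_0}$, the bound $\|h\|_1\le 2\|h_{T_0}\|_1$, the quadratic inequality $h^TJ_nh\le K_4\gamma_n\|h\|_1+\epsilon_n\|h\|_1^2$, and the rearrangements via $\kappa$ and $F_q$ — matches the paper's proof. Where you genuinely diverge is in how you obtain the quadratic lower bound on $h^T[\psi_n(0;\theta_0)-\psi_n(0;\hat\theta_n)]$. You use the integral mean-value theorem, $\psi_n(0;\hat\theta_n)-\psi_n(0;\theta_0)=-\bar V_nh$ with $\bar V_n=\int_0^1V_n(0;\theta_0+th)\,dt$, which forces you to compare $\bar V_n$ with $J_n$ and hence to control $\|V_n(0;\theta)-V_n(0;\theta_0)\|_\infty$ uniformly along the segment; as you note, the crude a priori bound $\|h\|_1\le2\|\theta_0\|_1$ only makes this difference $O_p(1)$, not $o_p(1)$, so you must bootstrap from Theorem \ref{re} (via $\|h\|_1\le2\sqrt{S}\,\|h\|_2\to^p0$ on the cone, $S$ fixed). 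The paper avoids all of this with Lemma \ref{pollard}: because of the exponential link, $h^T[\psi_n(0;\hat\theta_n)-\psi_n(0;\theta_0)]$ is \emph{exactly} a sum of terms of the form $c_k\, x_k(e^{2x_k}-1)$ with $x_k=h^TZ_{t_{k-1}^n}$ confined to the compact interval $I=[-2C\|\theta_0\|_1,2C\|\theta_0\|_1]$, and the elementary inequality $x(e^{2x}-1)\ge\nu x^2$ on $I$ gives $h^T[\psi_n(0;\hat\theta_n)-\psi_n(0;\theta_0)]\ge\tfrac{\nu}{2}h^TV_n(0;\theta_0)h$ directly, with no consistency input and only the single comparison $\epsilon_n=\|V_n(0;\theta_0)-J_n\|_\infty$ of Lemma \ref{app}. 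Your route is workable for Theorem \ref{comp} (the circularity you worry about is genuinely avoided by citing Theorem \ref{re}, though the same trick would not be available if you tried to prove Theorem \ref{re} itself this way), but it is heavier, and it yields bounds in terms of $\|\bar V_n-J_n\|_\infty$ rather than the paper's $\epsilon_n$ — immaterial for the stated limit results, but not literally the displayed bounds. If you adopt the paper's pointwise inequality you can drop the entire segment-Hessian analysis and the dependence on Theorem \ref{re}.
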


\section{Proofs}
\subsection{A stochastic inequality for the gradient of the log-quasi-likelihood}
In this subsection, we will show that under Assumption \ref{regularity},
\[\lim_{n \rightarrow \infty} P(\|\psi_n(b;\theta_0)\|_{\infty} \geq \gamma_n) =0\]
for any $b(\cdot)$ if $\gamma_n$ satisfies (\ref{gamma}) although we are interested only in the case of $b=0$.
First, let us decompose 
$\psi_n^i(b;\theta_0) = A_n^i + B_n^i + C_n^i$, where
\[
A_n^i := \frac{1}{n \Delta_n} \sum_{k=1}^n Z_{t_{k-1}^n}^i \exp(-2 \theta_0^T Z_{t_{k-1}^n}) 
\left| \int_{t_{k-1}^n}^{t_k^n} \{b(X_s) - b(X_{t_{k-1}^n})\} ds \right|^2, \]
\begin{eqnarray*}
B_n^i 
&:=& \frac{2}{n \Delta_n} \sum_{k=1}^n Z_{t_{k-1}^n}^i \exp(-2 \theta_0^T Z_{t_{k-1}^n})
\left(\int_{t_{k-1}^n}^{t_k^n} \{b(X_s)-b(X_{t_{k-1}^n})\} ds \right) \\
&& \qquad \qquad \times
\left(\int_{t_{k-1}^n}^{t_k^n} \exp(\theta_0^T Z_s) dW_s \right) 
\end{eqnarray*}
and
\[
C_n^i :=  \frac{1}{n \Delta_n} \sum_{k=1}^n Z_{t_{k-1}^n}^i \exp(-2 \theta_0^T Z_{t_{k-1}^n})
\left|\int_{t_{k-1}^n}^{t_k^n} \exp(\theta_0^T Z_s) dW_s \right|^2 - Z_{t_{k-1}^n}^i\Delta_n.
\]
We further decompose $C_n^i = D_n^i + E_n^i$, where
\begin{eqnarray*}
D_n^i 
&:=&  \frac{1}{n \Delta_n} \sum_{k=1}^n Z_{t_{k-1}^n}^i \exp(-2 \theta_0^T Z_{t_{k-1}^n})
\left|\int_{t_{k-1}^n}^{t_k^n} \exp(\theta_0^T Z_s) dW_s \right|^2 \\
&& \qquad \qquad
-Z_{t_{k-1}^n}^i(W_{t_k^n}-W_{t_{k-1}^n})^2 
\end{eqnarray*}
and
\[
E_n^i := \frac{1}{n \Delta_n} \sum_{k=1}^n Z_{t_{k-1}^n}^i \{(W_{t_k^n} - W_{t_{k-1}^n})^2 - \Delta_n\}.
\]
\begin{lem} \label{A}
If $\gamma_n$ satisfies (\ref{gamma}),
then it holds that
\[\lim_{n \rightarrow \infty} P\left(\sup_{1 \leq i \leq p_n} |A_n^i| \geq \gamma_n\right) =0\]
for any $b(\cdot)$ which satisfies Assumption \ref{regularity}.
\end{lem}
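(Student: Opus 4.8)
The plan is to bound $\sup_{1\le i\le p_n}|A_n^i|$ \emph{pathwise} by a single random variable that does not depend on $i$, take expectations, and then apply Markov's inequality together with (\ref{gamma}). The key point is that, unlike the terms $B_n^i,\dots,E_n^i$, the quantity $A_n^i$ is a purely deterministic-drift contribution that can be controlled coordinate by coordinate with a bound uniform in $i$, so no maximal inequality over the $p_n$ coordinates is needed.

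First I would use Assumption \ref{regularity}$(ii)$, which gives $|Z_{t_{k-1}^n}^i|\le C$ for every $i$ and, since $\theta_0$ is a fixed vector (with $S$ nonzero components), $\exp(-2\theta_0^T Z_{t_{k-1}^n})\le \exp(2C\|\theta_0\|_1)=:c_1<\infty$, a constant independent of $n$. Next, Assumption \ref{regularity}$(i)$ and the Cauchy--Schwarz inequality give, for each $k$,
\[
\left|\int_{t_{k-1}^n}^{t_k^n}\{b(X_s)-b(X_{t_{k-1}^n})\}\,ds\right|^2
\le \Delta_n\int_{t_{k-1}^n}^{t_k^n}\tilde L^2 |X_s-X_{t_{k-1}^n}|^2\,ds
\le \tilde L^2\Delta_n^2\sup_{s\in[t_{k-1}^n,t_k^n]}|X_s-X_{t_{k-1}^n}|^2 .
\]
Combining these, for every $i$,
\[
|A_n^i|\le \frac{c_1 C\tilde L^2}{n\Delta_n}\sum_{k=1}^n\Delta_n^2\sup_{s\in[t_{k-1}^n,t_k^n]}|X_s-X_{t_{k-1}^n}|^2
= \frac{c_1 C\tilde L^2\,\Delta_n}{n}\sum_{k=1}^n\sup_{s\in[t_{k-1}^n,t_k^n]}|X_s-X_{t_{k-1}^n}|^2 ,
\]
and since the right-hand side is free of $i$, the same bound holds for $\sup_{1\le i\le p_n}|A_n^i|$.

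Then I would take expectations and invoke Assumption \ref{regularity}$(iv)$ with $r=2$, which yields $E\big[\sup_{s\in[t_{k-1}^n,t_k^n]}|X_s-X_{t_{k-1}^n}|^2\big]\le\tilde C_2\Delta_n$ for every $k$; hence $E\big[\sup_{1\le i\le p_n}|A_n^i|\big]\le c_1 C\tilde L^2\tilde C_2\,\Delta_n^2=:c_2\Delta_n^2$. Finally, by Markov's inequality and (\ref{gamma}),
\[
P\Big(\sup_{1\le i\le p_n}|A_n^i|\ge\gamma_n\Big)\le\frac{E[\sup_i|A_n^i|]}{\gamma_n}\le\frac{c_2\Delta_n^2}{K_0\Delta_n^{1/2-\alpha}}=\frac{c_2}{K_0}\,\Delta_n^{3/2+\alpha}\longrightarrow 0
\]
as $n\to\infty$, because $\Delta_n=1/n\to0$ and $3/2+\alpha>0$.

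There is essentially no serious obstacle here. The only thing to be careful about is ensuring that the constant in front of the pathwise bound is genuinely independent of $i$ — which it is, thanks to the uniform boundedness of $Z$ in Assumption \ref{regularity}$(ii)$ — so that Markov's inequality can be applied directly to the supremum, with no union bound over the $p_n$ coordinates and hence no interaction with the growth rate (\ref{p_n}). (Should one wish to avoid citing $(iv)$, the $\Delta_n$-order second-moment bound for the increments of $X$ could instead be obtained from $(iii)$ together with the Burkholder--Davis--Gundy and Gronwall inequalities, but invoking $(iv)$ is the cleanest route.)
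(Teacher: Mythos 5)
Your proposal is correct and follows essentially the same route as the paper: a pathwise bound on $\sup_i |A_n^i|$ that is uniform in $i$ (via the uniform bound on $Z$), then Markov's inequality, the Cauchy--Schwarz inequality, the Lipschitz condition on $b$, and the second-moment increment bound from Assumption \ref{regularity}$(iv)$, yielding a bound of order $\Delta_n^2/\gamma_n = O(\Delta_n^{3/2+\alpha}) \to 0$. The only cosmetic difference is that you take the supremum over $s$ inside each subinterval before integrating, whereas the paper integrates $E[|X_s - X_{t_{k-1}^n}|^2]$ directly; both rest on the same assumption and give the same rate.
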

\begin{proof}
It follows from Markov's inequality and Schwartz's inequality and Assumption \ref{regularity} that 
\begin{align*}
P \left(\sup_{1 \leq i \leq p_n} |A_n^i| \geq \gamma_n \right) 
&\leq \frac{C\exp(2C \|\theta_0\|_1)}{n \Delta_n \gamma_n} \sum_{k = 1}^n 
E \left[\left|\int_{t_{k-1}^n}^{t_k^n} \{b(X_s) - b(X_{t_{k-1}^n})\} ds\right|^2 \right] \\
&\leq \frac{C\exp(2C \|\theta_0\|_1)}{n \Delta_n \gamma_n} \sum_{k = 1}^n 
E \left[\left(\int_{t_{k-1}^n}^{t_k^n} \{b(X_s) - b(X_{t_{k-1}^n})\}^2 ds\right) \Delta_n\right] \\
&\leq \frac{C\exp(2C \|\theta_0\|_1)}{n \Delta_n \gamma_n} \sum_{k = 1}^n
E \left[\Delta_n\int_{t_{k-1}^n}^{t_k^n} \tilde{L}^2 |X_s - X_{t_{k-1}^n}|^2 ds\right] \\
&\leq \frac{C\exp(2C \|\theta_0\|_1)}{n \gamma_n} \sum_{k = 1}^n
\tilde{L}^2 \int_{t_{k-1}^n}^{t_k^n} E[|X_s - X_{t_{k-1}^n}|^2]ds \\
&\leq \frac{C\exp(2C \|\theta_0\|_1)}{\gamma_n}
\tilde{L}^2 \tilde{C_2} \Delta_n^2. \\ 
\end{align*}
Noting that $\Delta_n \rightarrow 0$ and $\gamma_n = K_0 \Delta_n^{\frac{1}{2}-\alpha}$, 
we obtain the conclusion. 
\qed
\end{proof}
\begin{lem}\label{B}
Under the same assumptions as Lemma \ref{A}, it holds that
\[\lim_{n \rightarrow \infty} P\left(\sup_{1 \leq i \leq p_n} |B_n^i| \geq \gamma_n \right) = 0.\]
for any $b(\cdot)$ which satisfies Assumption \ref{regularity}.
\end{lem}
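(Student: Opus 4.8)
The plan is to imitate the proof of Lemma~\ref{A}, the only genuinely new ingredient being the stochastic--integral factor appearing in $B_n^i$, which will be controlled by the It\^o isometry. The first — and really the only — point that needs care is to bound $\sup_{1\le i\le p_n}|B_n^i|$ by a \emph{random variable that does not depend on $i$}. Using Assumption~\ref{regularity}~$(ii)$ to replace $|Z_{t_{k-1}^n}^i|$ by $C$ and $\exp(-2\theta_0^T Z_{t_{k-1}^n})$ by $\exp(2C\|\theta_0\|_1)$, one gets
\[
\sup_{1\le i\le p_n}|B_n^i|
\le \frac{2C\exp(2C\|\theta_0\|_1)}{n\Delta_n}\sum_{k=1}^n
\left|\int_{t_{k-1}^n}^{t_k^n}\{b(X_s)-b(X_{t_{k-1}^n})\}ds\right|\cdot
\left|\int_{t_{k-1}^n}^{t_k^n}\exp(\theta_0^T Z_s)dW_s\right|.
\]
Because the right-hand side is free of $i$, a single application of Markov's inequality will suffice; no union bound over the $p_n$ coordinates is involved, which is what lets the argument survive the regime $p_n\gg n$.

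Next I would apply the Cauchy--Schwarz inequality to the sum over $k$, bounding the right-hand side above by $\dfrac{2C\exp(2C\|\theta_0\|_1)}{n\Delta_n}\sqrt{U_n}\,\sqrt{V_n}$, where
\[
U_n:=\sum_{k=1}^n\left|\int_{t_{k-1}^n}^{t_k^n}\{b(X_s)-b(X_{t_{k-1}^n})\}ds\right|^2,\qquad
V_n:=\sum_{k=1}^n\left|\int_{t_{k-1}^n}^{t_k^n}\exp(\theta_0^T Z_s)dW_s\right|^2 .
\]
Markov's inequality, followed by the Cauchy--Schwarz inequality for expectations, then yields
\[
P\left(\sup_{1\le i\le p_n}|B_n^i|\ge\gamma_n\right)
\le \frac{2C\exp(2C\|\theta_0\|_1)}{n\Delta_n\,\gamma_n}\,\sqrt{E[U_n]}\,\sqrt{E[V_n]}.
\]

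It remains to estimate the two expectations. For $E[U_n]$ the computation is literally the one carried out for $A_n^i$ in Lemma~\ref{A}: Schwarz's inequality in the inner time integral together with Assumption~\ref{regularity}~$(i)$ and $(iv)$ gives $E[U_n]\le \tilde L^2\tilde C_2\,\Delta_n^2$. For $E[V_n]$ the It\^o isometry and Assumption~\ref{regularity}~$(ii)$ give
\[
E[V_n]=\sum_{k=1}^n E\!\left[\int_{t_{k-1}^n}^{t_k^n}\exp(2\theta_0^T Z_s)\,ds\right]\le \exp(2C\|\theta_0\|_1)\, n\Delta_n=\exp(2C\|\theta_0\|_1).
\]
Substituting these bounds and using $n\Delta_n=1$ and $\gamma_n=K_0\Delta_n^{\frac12-\alpha}$, the right-hand side is a constant multiple of $\Delta_n/\gamma_n=O(\Delta_n^{\frac12+\alpha})$, which tends to $0$; this completes the proof.

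The main obstacle, as indicated, is purely conceptual rather than computational: one must notice that the factor $|Z_{t_{k-1}^n}^i|\le C$ can be pulled out uniformly in $i$, so that $\sup_i|B_n^i|$ is dominated by an $i$-independent quantity and a first-moment (Markov) bound is enough. After that, the proof is a routine chain of Cauchy--Schwarz, the It\^o isometry, and the moment estimate of Assumption~\ref{regularity}~$(iv)$ already exploited for $A_n^i$.
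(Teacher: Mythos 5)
Your proposal is correct and follows essentially the same route as the paper: bound $\sup_i|B_n^i|$ by an $i$-independent quantity using Assumption \ref{regularity} $(ii)$, then Markov, Cauchy--Schwarz, the It\^o isometry, and the moment bound of Assumption \ref{regularity} $(iv)$, arriving at a bound of order $\Delta_n/\gamma_n=O(\Delta_n^{1/2+\alpha})$. The only (immaterial) difference is that you apply Cauchy--Schwarz to the sum over $k$ and then to the expectation, whereas the paper applies it termwise inside each expectation; both yield the same final estimate.
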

\begin{proof}
Using Markov's inequality and Schwartz's inequality, we have that 
\begin{eqnarray*}
\lefteqn{P \left(\sup_{1 \leq i \leq p_n} |B_n^i| \geq \gamma_n \right)} \\
&\leq& \frac{2C\exp(2C\|\theta_0\|_1)}{n \Delta_n \gamma_n} \sum_{k=1}^n 
\left(E \left[\left|\int_{t_{k-1}^n}^{t_k^n} \{b(X_s)-b(X_{t_{k-1}^n})\} ds\right|^2 \right]\right)^{\frac{1}{2}}\\
&&\qquad \quad \quad \quad \quad \quad \quad \ \ 
\times \left(E \left[\left|\int_{t_{k-1}^n}^{t_k^n} \exp(\theta_0^T Z_s) dW_s \right|^2\right]\right)^{\frac{1}{2}} \\
&\leq& \frac{2C\exp(2C\|\theta_0\|_1)}{n \Delta_n \gamma_n} \sum_{k=1}^n 
\left(E\left[\Delta_n \int_{t_{k-1}^n}^{t_k^n} |b(X_s)-b(X_{t_{k-1}^n})|^2 ds\right]\right)^{\frac{1}{2}}\\
&&\qquad \quad \quad \quad \quad \quad \quad \ \ 
\times \left(E\left[\int_{t_{k-1}^n}^{t_k^n} \exp(2\theta_0^T Z_s)ds\right]\right)^{\frac{1}{2}}\\
&\leq& \frac{2C\exp(2C\|\theta_0\|_1)}{n \Delta_n \gamma_n} n 
\left(\tilde{L}^2\tilde{C}_2\Delta_n^3\right)^{\frac{1}{2}}
\left(\exp(2C\|\theta_0\|_1) \Delta_n\right)^{\frac{1}{2}}\\
&\leq& \frac{C\tilde{L}\tilde{C}_2^{\frac{1}{2}}\Delta_n\exp(3C\|\theta_0\|_1)}{\gamma_n}.
\end{eqnarray*}
The right-hand side of this inequality tends to $0$ as $n \rightarrow \infty$. 
\qed
\end{proof}
Lemma \ref{A}, and Lemma \ref{B} imply that we can ignore the effect of $b(\cdot)$.
So we may take $b(x) = 0$ when we define the estimator $\hat{\theta}_n$. 
The following lemmas give some inequalities about $D_n^i$ and $E_n^i$.
\begin{lem}\label{D}
Under the same assumption as Lemma \ref{A}, it holds that 
\[\lim_{n\rightarrow \infty} P\left(\sup_{1 \leq i \leq p_n} |D_n^i| \geq \gamma_n \right) = 0.\]
\end{lem}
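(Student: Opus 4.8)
The plan is to dominate $\sup_{1\le i\le p_n}|D_n^i|$ \emph{pathwise} (uniformly in $i$) by a single random variable that is $o_p(\gamma_n)$, so that for this lemma no union bound over the $p_n$ coordinates --- and hence no growth condition on $p_n$ --- is needed. Write $\Delta W_k:=W_{t_k^n}-W_{t_{k-1}^n}$, $a_k:=\exp(\theta_0^T Z_{t_{k-1}^n})$, and, for $s\in[t_{k-1}^n,t_k^n]$, $g_s:=\exp(\theta_0^T Z_s)-\exp(\theta_0^T Z_{t_{k-1}^n})$ and $R_k:=\int_{t_{k-1}^n}^{t_k^n}g_s\,dW_s$. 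Then $\int_{t_{k-1}^n}^{t_k^n}\exp(\theta_0^T Z_s)\,dW_s=R_k+a_k\Delta W_k$, and expanding the square yields the exact identity
\[
a_k^{-2}\Big(\int_{t_{k-1}^n}^{t_k^n}\exp(\theta_0^T Z_s)\,dW_s\Big)^{2}-(\Delta W_k)^{2}
= a_k^{-2}R_k^{2}+2a_k^{-1}R_k\,\Delta W_k ,
\]
so the leading term $(\Delta W_k)^2$ cancels and $D_n^i=\frac1{n\Delta_n}\sum_{k=1}^n Z_{t_{k-1}^n}^i\big(a_k^{-2}R_k^{2}+2a_k^{-1}R_k\Delta W_k\big)$; the size of this remainder is governed by the oscillation of $Z$ over a mesh interval, which is exactly what Assumption \ref{regularity}(iv) controls.

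Next I would bound the remainder. Assumption \ref{regularity}(ii) gives $|Z_{t_{k-1}^n}^i|\le C$ and $a_k^{-1}\le e^{C\|\theta_0\|_1}$ uniformly in $i$ and $k$, and the elementary inequality $2|R_k\Delta W_k|\le \Delta_n^{-1/2}R_k^{2}+\Delta_n^{1/2}(\Delta W_k)^{2}$ then yields, for every $i$ and all large $n$,
\[
|D_n^i|\;\le\; c_1\,\Delta_n^{-1/2}P_n+c_2\,\Delta_n^{1/2}Q_n,\qquad
P_n:=\frac1{n\Delta_n}\sum_{k=1}^n R_k^{2},\quad Q_n:=\frac1{n\Delta_n}\sum_{k=1}^n (\Delta W_k)^{2},
\]
where $c_1,c_2$ depend only on $C$ and $\|\theta_0\|_1$. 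Using the weight $\Delta_n^{1/2}$ (rather than a fixed constant) in this arithmetic-geometric step is precisely what will leave the needed margin.

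Then I would estimate the two means. Clearly $E[Q_n]=1$. For $P_n$, the It\^o isometry gives $E[R_k^{2}]=E\!\int_{t_{k-1}^n}^{t_k^n}g_s^{2}\,ds$; the mean value theorem together with $|\theta_0^T Z_s|\le C\|\theta_0\|_1$ gives $|g_s|\le e^{C\|\theta_0\|_1}|\theta_0^T(Z_s-Z_{t_{k-1}^n})|$, and since only the $S$ coordinates in $T_0$ occur in $\theta_0^T(Z_s-Z_{t_{k-1}^n})$, Jensen's inequality combined with Assumption \ref{regularity}(iv) (with $r=2$) gives $E[g_s^{2}]\le c_3\Delta_n$ for a constant $c_3$ depending only on $C$, $\|\theta_0\|_1$, $\tilde C_2$ --- in particular independent of $n$ and of $p_n$. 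Hence $E[P_n]\le c_3\Delta_n$, and Markov's inequality with $\gamma_n=K_0\Delta_n^{1/2-\alpha}$ gives
\[
P\Big(\sup_{1\le i\le p_n}|D_n^i|\ge\gamma_n\Big)
\le P\Big(c_1\Delta_n^{-1/2}P_n\ge\tfrac{\gamma_n}{2}\Big)+P\Big(c_2\Delta_n^{1/2}Q_n\ge\tfrac{\gamma_n}{2}\Big)
\le \frac{2c_1E[P_n]}{\gamma_n\Delta_n^{1/2}}+\frac{2c_2\Delta_n^{1/2}E[Q_n]}{\gamma_n}
\le c_4\,\Delta_n^{\alpha},
\]
which tends to $0$ as $n\to\infty$ because $\alpha>0$.

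I expect the only delicate point to be keeping enough ``room'' in the estimate. A cruder bound $\big|a_k^{-2}(\cdots)^{2}-(\Delta W_k)^{2}\big|\lesssim R_k^{2}+|R_k||\Delta W_k|$ treated by the Cauchy-Schwarz inequality across $k$ would give a bound of order $\sqrt{P_nQ_n}$, which is still $O_p(\Delta_n^{1/2})=o_p(\gamma_n)$ but forces one to control $E[P_nQ_n]$, hence to compute fourth moments of the Brownian increments together with their cross terms with the $R_k$; the tuned split $2|R_k\Delta W_k|\le\Delta_n^{-1/2}R_k^{2}+\Delta_n^{1/2}(\Delta W_k)^{2}$ reduces everything to the two one-line bounds $E[P_n]=O(\Delta_n)$ and $E[Q_n]=1$. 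The other point to watch --- and the reason no condition on $p_n$ is required for Lemma \ref{D} --- is that Assumption \ref{regularity}(iv) must be invoked only for the finitely many indices in $T_0$, so that the constants $c_1,\dots,c_4$ do not depend on $p_n$.
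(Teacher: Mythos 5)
Your proof is correct and is essentially the paper's argument: both isolate the cancellation of the $(\Delta W_k)^2$ term, reduce the remainder to the stochastic integral of the increment $\exp(\theta_0^T Z_s)-\exp(\theta_0^T Z_{t_{k-1}^n})$, control it via the It\^o isometry together with Assumption \ref{regularity}(iv) restricted to the indices in $T_0$, and finish with Markov's inequality using the uniform bound $|Z^i|\leq C$ (so no union bound over $i$). The only difference is cosmetic: the paper factors $a^2-b^2=(a+b)(a-b)$ and applies Cauchy--Schwarz termwise, while you complete the square and use a weighted arithmetic--geometric mean step; both yield the same $O(\Delta_n^{\alpha})$ rate.
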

\begin{proof}
It follows from Markov's inequality and Schwartz's inequality that
\begin{eqnarray*}
P\left(\sup_{1 \leq i \leq p_n} |D_n^i| \geq \gamma_n\right)
&\leq& \frac{C}{n\Delta_n \gamma_n} \sum_{k=1}^n 
E\left[
\left|D_1\right|  
\cdot \left|D_2 \right|
\right] \\
&\leq& \frac{C}{n\Delta_n \gamma_n} \sum_{k=1}^n
(E[|D_1|^2])^{\frac{1}{2}} (E[|D_2|^2])^{\frac{1}{2}}, 
\end{eqnarray*}
where $D_1$ and $D_2$ are defined as follows
\[D_1 := \int_{t_{k-1}^n}^{t_k^n} \{\exp(\theta_0^T[Z_s-Z_{t_{k-1}^n}])+1\}dW_s,\]
\[D_2 := \int_{t_{k-1}^n}^{t_k^n} \{\exp(\theta_0^T[Z_s-Z_{t_{k-1}^n}])-1\}dW_s.\]
We can see that
\begin{align*}
(E[|D_1|^2])^{\frac{1}{2}}
&= \left(E\left[
\int_{t_{k-1}^n}^{t_k^n} 
\{\exp(\theta_0^T [Z_s-Z_{t_{k-1}^n}])+1\}^2 ds
\right]\right)^{\frac{1}{2}} \\
&\leq (\exp(2C\|\theta_0\|_1)+1) \Delta_n^{\frac{1}{2}}.
\end{align*}
Noting that there exists a positive constant $C_1$ such that 
\begin{align*}
|\exp(\theta_0^T [Z_s-Z_{t_{k-1}^n}]) -1 |
&\leq C_1 |\theta_0^T [Z_s-Z_{t_{k-1}^n}]| \\
&\leq C_1 \|\theta_0\|_1 \max_{ i \in T_0} |Z_s^i - Z_{t_{k-1}^n}^i|,
\end{align*}
where $T_0 := \{i : \theta_0^i \not = 0\}$,
we have that 
\begin{align*}
(E[|D_2|^2])^{\frac{1}{2}}
&= \left(E\left[
\int_{t_{k-1}^n}^{t_k^n} \{\exp(\theta_0^T[Z_s - Z_{t_{k-1}^n}])-1\}^2 ds
\right]\right)^{\frac{1}{2}} \\
&\leq \left(E\left[
\int_{t_{k-1}^n}^{t_k^n} C_1^2 \|\theta_0\|_1^2 \max_{i \in T_0}|Z_s^i - Z_{t_{k-1}^n}^i|^2 ds
\right]\right)^{\frac{1}{2}} \\
&\leq C_1 \tilde{C}_2 \|\theta_0\|_1 \Delta_n.
\end{align*}
Consequently, it holds that 
\[P\left(\sup_{1 \leq i \leq p_n} |D_n^i| \geq \gamma_n \right)
\leq \frac{C C_1 \tilde{C}_2 \|\theta_0\|_1 (\exp(2C\|\theta_0\|_1)+1) \Delta_n^{\frac{1}{2}}}{\gamma_n} \rightarrow 0.\]
We thus obtain the conclusion.
\qed
\end{proof}
\begin{lem}\label{E}
Suppose that $\gamma_n$ and $p_n$ satisfy (\ref{gamma}) and (\ref{p_n}) respectively.
Then, it holds that 
\[\lim_{n \rightarrow \infty} P\left(\sup_{1 \leq i \leq p_n} |E_n ^i| \geq 3\gamma_n \right) = 0.\]
\end{lem}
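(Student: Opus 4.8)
The idea is that $E_n^{i}$ is, up to a constant, a stochastic integral against $W$, so a single exponential inequality for continuous martingales will handle all coordinates at once. By It\^o's formula, $(W_{t_k^n}-W_{t_{k-1}^n})^2-\Delta_n=2\int_{t_{k-1}^n}^{t_k^n}(W_s-W_{t_{k-1}^n})\,dW_s$ for each $k$. Writing $\eta_n(s):=t_{k-1}^n$ for $s\in[t_{k-1}^n,t_k^n)$ and using $n\Delta_n=1$, this gives
\[
E_n^{i}=2\int_0^1 H_s^{i}\,dW_s,\qquad H_s^{i}:=Z_{\eta_n(s)}^{i}\bigl(W_s-W_{\eta_n(s)}\bigr),
\]
where each $N_t^{i}:=\int_0^t H_s^{i}\,dW_s$ is a continuous martingale null at $0$ with
\[
\langle N^{i}\rangle_1=\int_0^1 (H_s^{i})^2\,ds\le C^2\int_0^1\bigl(W_s-W_{\eta_n(s)}\bigr)^2\,ds=:C^2 R_n
\]
by Assumption \ref{regularity}$(ii)$; the crucial point is that the dominating quantity $R_n$ is the \emph{same} for every $i$.

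Next I would fix a constant $\rho\in(0,2\alpha-\zeta)$, which is possible since $\zeta<2\alpha$, and control $R_n$ crudely. Since $E[R_n]=\sum_{k=1}^n\int_{t_{k-1}^n}^{t_k^n}(s-t_{k-1}^n)\,ds=\tfrac12 n\Delta_n^{2}=\tfrac12\Delta_n$, Markov's inequality gives $P(R_n>\Delta_n^{1-\rho}/C^2)\le\tfrac12 C^2\Delta_n^{\rho}\to0$. On the complement of this event one has $\langle N^{i}\rangle_1\le\Delta_n^{1-\rho}$ for all $i$ simultaneously, so the standard exponential inequality for continuous local martingales, in the form $P(\sup_{t\le1}|N_t|\ge x,\ \langle N\rangle_1\le v)\le 2e^{-x^2/(2v)}$, together with a union bound over $i=1,\dots,p_n$ and the identity $|E_n^{i}|=2|N_1^{i}|$, yields
\[
P\Bigl(\sup_{1\le i\le p_n}|E_n^{i}|\ge 3\gamma_n\Bigr)\le\tfrac12 C^2\Delta_n^{\rho}+2p_n\exp\!\left(-\frac{9\gamma_n^{2}}{8\,\Delta_n^{1-\rho}}\right).
\]

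It then remains to check the second term. With $\gamma_n=K_0\Delta_n^{1/2-\alpha}$ we get $\gamma_n^{2}/\Delta_n^{1-\rho}=K_0^{2}\Delta_n^{\rho-2\alpha}=K_0^{2}n^{2\alpha-\rho}$, so the exponent above equals $\log(2p_n)-\tfrac98 K_0^{2}n^{2\alpha-\rho}$; since $\log(1+p_n)=O(n^{\zeta})$ by (\ref{p_n}) and $2\alpha-\rho>\zeta$, this tends to $-\infty$, and both terms on the right-hand side vanish, proving the lemma. The only delicate point is the calibration of $\rho$: the exponential tail bound is effective precisely when $\langle N^{i}\rangle_1$ is small, while a one-line Markov bound for $R_n$ forces the slack $\langle N^{i}\rangle_1\lesssim\Delta_n^{1-\rho}$ with $\rho>0$ strictly positive, and the hypothesis $\zeta<2\alpha$ in (\ref{gamma})--(\ref{p_n}) is exactly what leaves enough room for this loss while still dominating the $\log p_n$ produced by the union bound over the $p_n$ coordinates.
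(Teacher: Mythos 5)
Your proof is correct, but it takes a genuinely different route from the paper's. The paper keeps the discrete-time structure: it truncates the increments $U_{t_k^n}=(W_{t_k^n}-W_{t_{k-1}^n})^2-\Delta_n$ at a level $\eta=\Delta_n^{1/2+\alpha-\beta}$, splits $E_n^i$ into a bounded martingale-difference part (handled by Freedman's Bernstein inequality for discrete martingales, followed by the Orlicz-norm maximal inequality of van der Vaart and Wellner, Lemma 2.2.10, to control the supremum over the $p_n$ coordinates), a conditional-expectation remainder, and a tail part, the last two being disposed of by Markov/Chebyshev arguments. You instead exploit the continuous-time structure: the It\^o identity $(W_{t_k^n}-W_{t_{k-1}^n})^2-\Delta_n=2\int_{t_{k-1}^n}^{t_k^n}(W_s-W_{t_{k-1}^n})\,dW_s$ turns each $E_n^i$ exactly into a stochastic integral $2N_1^i$, and the key observation that all $p_n$ brackets $\langle N^i\rangle_1$ are dominated by the single random variable $C^2R_n$ lets one Markov bound on $R_n$ replace the paper's truncation step; the exponential inequality $P(|N_1|\ge x,\ \langle N\rangle_1\le v)\le 2e^{-x^2/(2v)}$ (from the exponential supermartingale $\exp(\lambda N_t-\tfrac{\lambda^2}{2}\langle N\rangle_t)$) plus a plain union bound then replaces Freedman plus the Orlicz maximal inequality. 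The calibration is the same in both arguments: your slack parameter $\rho\in(0,2\alpha-\zeta)$ plays exactly the role of the paper's $\beta$, and in both cases the hypothesis $\zeta<2\alpha$ is what allows the sub-Gaussian exponent $n^{2\alpha-\rho}$ to beat the $\log p_n=O(n^\zeta)$ cost of uniformity in $i$. Your argument is arguably cleaner (no truncation, no Orlicz machinery, and the integrand $Z^i_{\eta_n(s)}(W_s-W_{\eta_n(s)})$ is adapted and bounded on each block, so $N^i$ is a genuine continuous local martingale); the paper's discrete Bernstein--Orlicz route is more robust in that it would survive if the squared Brownian increments were replaced by increments of a process without such an explicit It\^o representation.
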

\begin{proof}
Put $U_{t_k^n} := |W_{t_k^n} - W_{t_{k-1}^n}|^2 - \Delta_n$ and 
$\eta := \Delta_n^{1/2 + \alpha - \beta}$, where $0 < \beta < 2\alpha - \zeta$ is a constant.
Then, we have that
\begin{align*}
E_n^i
&= \frac{1}{n \Delta_n} \sum_{k=1}^n Z_{t_{k-1}^n}^i U_{t_k^n} 1_{\{|U_{t_k^n}| \leq \eta \}}
+ \frac{1}{n \Delta_n} \sum_{k =1}^n Z_{t_{k-1}^n}^i U_{t_k^n} 1_{\{|U_{t_k^n}| > \eta \}} \\
&=: F_n^i + G_n^i.
\end{align*}
It is sufficient to prove that 
$P(\sup_i |F_n^i| \geq 2\gamma_n) \rightarrow 0$ and $P(\sup_i |G_n^i| \geq \gamma_n) \rightarrow 0$.
Note that 
\begin{eqnarray*}
F_n^i
&=& \frac{1}{n \Delta_n} \sum_{k =1}^n Z_{t_{k-1}^n}^i \{U_{t_k^n} 1_{\{|U_{t_k^n}| \leq \eta \}}
- E[U_{t_k^n}1_{\{|U_{t_k^n}| \leq \eta\}}|\mathcal{F}_{t_{k-1}^n}]\} \\
&&\qquad \qquad
+ Z_{t_{k-1}^n}^i  E[U_{t_k^n}1_{\{|U_{t_k^n}| \leq \eta\}}| \mathcal{F}_{t_{k-1}^n}] \\
&=:& H_n^i + I_n^i.
\end{eqnarray*}
We can see that for all $k$ and $i$, 
\[|Z_{t_{k-1}^n}^i\{U_{t_k^n}1_{\{|U_{t_k^n}| \leq \eta\}} - E[U_{t_k^n}1_{\{|U_{t_k^n}| \leq \eta\}}]\}| \leq 2C\eta\]
\[E[|Z_{t_{k-1}^n}^i|^2 \{U_{t_k^n}1_{\{|U_{t_k^n}| \leq \eta\}} - E[U_{t_k^n}1_{\{|U_{t_k^n}| \leq \eta\}}]| \mathcal{F}_{t_{k-1}^n}]\}^2 | \mathcal{F}_{t_{k-1}^n} ] \leq C^2 \Delta_n^2.\]
Now, it follows from Bernstein's inequality for martingales (See Theorem $1.6$ from Freedman (1975).) that
\[
P \left(|H_n^i| \geq \gamma_n \right)
\leq 2 \exp \left(- \frac{\gamma_n^2}{2(2C\eta \gamma_n + C^2\Delta_n^2)} \right).
\]
Write $\|\cdot\|_{\Phi}$ for Orlicz norm with respect to $\Phi(x) := e^x-1$.
Lemma $2.2.10$ from van der Vaart and Wellner (1996) implies that there exists a constant $L>0$
depending only on $\Phi$ such that 
\[
\left\|\sup_{1 \leq i \leq p_n} |H_n^i|\right\|_{\Phi} \leq L\left\{2C\eta \log(1+p_n) +\sqrt{C^2\Delta_n^2 \log(1+p_n)}\right\}.
\]
Using Markov's inequality, we have that 
\begin{align*}
P\left(\sup_{1 \leq i \leq p_n} |H_n^i| \geq \gamma_n \right)
&= P \left(\Phi \left(
\frac{\sup_i |H_n^i|}{\left\|\sup_i |H_n^i|\right\|_{\Phi}}\right) 
\geq \Phi \left(\frac{\gamma_n}{\left\|\sup_i |H_n^i|\right\|_{\Phi}}\right)\right) \\ 
&\leq \Phi \left(\frac{\gamma_n}{\left\|\sup_i |H_n^i|\right\|_{\Phi}}\right)^{-1} \\
&\leq \Phi
\left(\frac{\gamma_n}{L\left\{2C\eta \log(1+p_n) +\sqrt{C^2\Delta_n^2 \log(1+p_n)}\right\}}\right)^{-1} \\
&\rightarrow 0.
\end{align*}
On the other hand, it holds that 
\begin{align*}
I_n^i 
&= \frac{1}{n \Delta_n} \sum_{k = 1}^n Z_{t_{k-1}^n}^i 
\left\{E[U_{t_k^n} - U_{t_k^n} 1_{\{|U_{t_k^n}| > \eta\}} | \mathcal{F}_{t_{k-1}^n}]\right\} \\
&= \frac{1}{n \Delta_n} \sum_{k = 1}^n Z_{t_{k-1}^n}^i E[-U_{t_k^n} 1_{\{|U_{t_k^n}| > \eta\}}| \mathcal{F}_{t_{k-1}^n}].
\end{align*}
So we thus obtain that 
\begin{align*}
P \left(\sup_{1 \leq i \leq p_n} |I_n^i| \geq \gamma_n \right)
&\leq \frac{1}{\gamma_n} E\left[\sup_{1 \leq i \leq p_n} 
\left|\frac{1}{n\Delta_n} \sum_{k=1}^n Z_{t_{k-1}^n}^i 
E[U_{t_k^n}1_{\{|U_{t_k^n}| > \eta \}}| \mathcal{F}_{t_{k-1}^n}]\right|\right] \\
&\leq \frac{C}{n \Delta_n \gamma_n} \sum_{k=1}^n E\left[E\left[\frac{|U_{t_k^n}|^2}{\eta}| \mathcal{F}_{t_{k-1}^n}\right]\right] \\
&= \frac{2C \Delta_n}{\gamma_n \eta} \\
&\rightarrow 0.
\end{align*}
A similar calculation leads us that 
\[P \left(\sup_{1 \leq i \leq p_n}|G_n^i| \geq \gamma_n\right) \rightarrow 0.\]
This yields the conclusion.
\qed
\end{proof}
After all, we obtain the next lemma.
\begin{lem}\label{grad}
Suppose that $\gamma_n$ and $p_n$ satisfy (\ref{gamma}) and (\ref{p_n}) respectively.
Then, it holds for any $b(\cdot)$ that
\[\lim_{n \rightarrow \infty} P\left(\|\psi_n (b;\theta_0)\|_\infty \geq 6 \gamma_n\right) = 0.\]
\end{lem}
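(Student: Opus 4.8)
The plan is to assemble Lemma~\ref{grad} directly from the four component estimates already established. Recall the decomposition $\psi_n^i(b;\theta_0) = A_n^i + B_n^i + C_n^i$ with $C_n^i = D_n^i + E_n^i$, so that
\[
\|\psi_n(b;\theta_0)\|_\infty = \sup_{1 \leq i \leq p_n} |A_n^i + B_n^i + D_n^i + E_n^i|
\leq \sup_i |A_n^i| + \sup_i |B_n^i| + \sup_i |D_n^i| + \sup_i |E_n^i|.
\]
Hence the event $\{\|\psi_n(b;\theta_0)\|_\infty \geq 6\gamma_n\}$ is contained in the union
\[
\Bigl\{\sup_i |A_n^i| \geq \gamma_n\Bigr\} \cup \Bigl\{\sup_i |B_n^i| \geq \gamma_n\Bigr\} \cup \Bigl\{\sup_i |D_n^i| \geq \gamma_n\Bigr\} \cup \Bigl\{\sup_i |E_n^i| \geq 3\gamma_n\Bigr\},
\]
because if all four complementary events held, the triangle inequality would give $\|\psi_n(b;\theta_0)\|_\infty < \gamma_n + \gamma_n + \gamma_n + 3\gamma_n = 6\gamma_n$. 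A union bound then yields
\[
P\bigl(\|\psi_n(b;\theta_0)\|_\infty \geq 6\gamma_n\bigr)
\leq P\Bigl(\sup_i |A_n^i| \geq \gamma_n\Bigr) + P\Bigl(\sup_i |B_n^i| \geq \gamma_n\Bigr) + P\Bigl(\sup_i |D_n^i| \geq \gamma_n\Bigr) + P\Bigl(\sup_i |E_n^i| \geq 3\gamma_n\Bigr).
\]

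Each of the four terms on the right tends to $0$ as $n \to \infty$: the first by Lemma~\ref{A}, the second by Lemma~\ref{B}, the third by Lemma~\ref{D} (all of which require only the condition (\ref{gamma}) on $\gamma_n$ together with Assumption~\ref{regularity}), and the fourth by Lemma~\ref{E}, which additionally uses the growth condition (\ref{p_n}) on $p_n$. Since a finite sum of terms each converging to $0$ converges to $0$, the conclusion follows. The statement holds for any $b(\cdot)$ satisfying Assumption~\ref{regularity} precisely because Lemmas~\ref{A}, \ref{B}, \ref{D} are stated uniformly in such $b$, and $D_n^i, E_n^i$ do not involve $b$ at all.

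There is essentially no obstacle here — the lemma is a bookkeeping corollary of the preceding work. The only point that needs a moment's care is the choice of the split $6\gamma_n = \gamma_n + \gamma_n + \gamma_n + 3\gamma_n$, which is dictated by the fact that Lemma~\ref{E} controls $\sup_i|E_n^i|$ only at level $3\gamma_n$ rather than $\gamma_n$ (the factor $3$ there arising from the further decomposition $E_n^i = F_n^i + G_n^i$ and $F_n^i = H_n^i + I_n^i$). One should also note that the constant $6$ is harmless for the applications: replacing $\gamma_n$ by $6\gamma_n$ only changes the constant $K_0$ in (\ref{gamma}), so the consistency theorems of Section~3 are unaffected.
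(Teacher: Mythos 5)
Your proof is correct and is exactly the argument the paper intends: the paper gives no explicit proof of Lemma~\ref{grad} (it simply says ``After all, we obtain the next lemma''), and the assembly via the decomposition $\psi_n^i = A_n^i + B_n^i + D_n^i + E_n^i$, the split $6\gamma_n = \gamma_n + \gamma_n + \gamma_n + 3\gamma_n$, and a union bound over Lemmas~\ref{A}, \ref{B}, \ref{D}, \ref{E} is precisely what is meant. No gaps.
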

This lemma states that the true value $\theta_0$ belongs to the constraint set $\mathcal{C}_n$ 
with large probability when the sample size $n$ is large.
\subsection{Some discussions on the Hessian}
In this subsection, we prepare two lemmas for $V_n(0;\theta_0)$.
The next lemma states that $V_n(0;\theta_0)$ is approximated by $J_n$.
\begin{lem}\label{app}
The random sequence $\epsilon_n$ defined by  
\[\epsilon_n := \left\|V_n(0;\theta_0) - J_n \right\|_\infty \]
converges in probability to $0$.
\end{lem}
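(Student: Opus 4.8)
The plan is to reduce the claim to the three estimates already established in Section 4.1. First, note that the $(i,j)$-entry of $V_n(0;\theta_0)-J_n$ equals
\[
\frac{2}{n}\sum_{k=1}^n Z_{t_{k-1}^n}^i Z_{t_{k-1}^n}^j R_k,\qquad R_k:=\frac{\exp(-2\theta_0^T Z_{t_{k-1}^n})|X_{t_k^n}-X_{t_{k-1}^n}|^2}{\Delta_n}-1.
\]
Writing $\Delta W_k:=W_{t_k^n}-W_{t_{k-1}^n}$, $\beta_k:=\int_{t_{k-1}^n}^{t_k^n}b(X_s)\,ds$ and $M_k:=\int_{t_{k-1}^n}^{t_k^n}\exp(\theta_0^T Z_s)\,dW_s$, one has from (\ref{model}), after expanding $|X_{t_k^n}-X_{t_{k-1}^n}|^2=(\beta_k+M_k)^2$ and inserting $(\Delta W_k)^2$,
\[
\Delta_n R_k=\exp(-2\theta_0^T Z_{t_{k-1}^n})\big(\beta_k^2+2\beta_k M_k\big)+\big(\exp(-2\theta_0^T Z_{t_{k-1}^n})M_k^2-(\Delta W_k)^2\big)+\big((\Delta W_k)^2-\Delta_n\big).
\]
Since $|Z_t^i Z_t^j|\le C^2$ for all $t$ and all $i,j$ by Assumption \ref{regularity}(ii), the triangle inequality gives $\epsilon_n\le T_n^{(1)}+T_n^{(2)}+T_n^{(3)}$, where $T_n^{(\ell)}:=\sup_{1\le i,j\le p_n}\big|\frac{2}{n\Delta_n}\sum_{k=1}^n Z_{t_{k-1}^n}^i Z_{t_{k-1}^n}^j S_k^{(\ell)}\big|$ and $S_k^{(1)},S_k^{(2)},S_k^{(3)}$ are, in order, the three summands on the right-hand side of the last display. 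It therefore suffices to show $T_n^{(\ell)}\rightarrow^p 0$ for $\ell=1,2,3$; since only convergence in probability is asserted, one may compare each $T_n^{(\ell)}$ with $\gamma_n$, or indeed with any fixed positive constant.

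The first two pieces require nothing new. For $T_n^{(1)}$ I would use Markov's inequality together with $E[\beta_k^2]=O(\Delta_n^2)$, which follows from the Lipschitz property of $b$ in Assumption \ref{regularity}(i) and the moment bound of Assumption \ref{regularity}(iii), and with $E[M_k^2]=O(\Delta_n)$ from Assumption \ref{regularity}(ii); this reproduces the bounds of Lemmas \ref{A} and \ref{B} and gives $P(T_n^{(1)}\ge\gamma_n)\rightarrow 0$. For $T_n^{(2)}$ I would invoke the factorization used in the proof of Lemma \ref{D},
\[
\exp(-2\theta_0^T Z_{t_{k-1}^n})M_k^2-(\Delta W_k)^2=\Big(\int_{t_{k-1}^n}^{t_k^n}\big(\exp(\theta_0^T[Z_s-Z_{t_{k-1}^n}])+1\big)\,dW_s\Big)\Big(\int_{t_{k-1}^n}^{t_k^n}\big(\exp(\theta_0^T[Z_s-Z_{t_{k-1}^n}])-1\big)\,dW_s\Big),
\]
together with the Cauchy--Schwarz estimate there, which relies on $|\exp(\theta_0^T[Z_s-Z_{t_{k-1}^n}])-1|\le C_1\|\theta_0\|_1\max_{i\in T_0}|Z_s^i-Z_{t_{k-1}^n}^i|$ and Assumption \ref{regularity}(iv), to obtain $P(T_n^{(2)}\ge\gamma_n)\rightarrow 0$.

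The hard part is $T_n^{(3)}$, which is the matrix-valued analogue of the quantity $E_n$ treated in Lemma \ref{E}: with $U_{t_k^n}:=(\Delta W_k)^2-\Delta_n$ one has $T_n^{(3)}=\sup_{1\le i,j\le p_n}\big|\frac{2}{n\Delta_n}\sum_{k=1}^n Z_{t_{k-1}^n}^i Z_{t_{k-1}^n}^j U_{t_k^n}\big|$. I would run the argument of Lemma \ref{E} verbatim: split $U_{t_k^n}$ at the level $\eta:=\Delta_n^{1/2+\alpha-\beta}$ with $0<\beta<2\alpha-\zeta$; decompose the truncated part into a bounded martingale plus its predictable compensator; apply Bernstein's inequality for martingales (Theorem 1.6 of Freedman (1975)) with the same pointwise and conditional-variance bounds as in Lemma \ref{E}, now with $C$ replaced by $C^2$; and pass to the supremum over the at most $p_n^2$ pairs $(i,j)$ via the Orlicz-norm maximal inequality (Lemma 2.2.10 of van der Vaart and Wellner (1996)). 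The only genuine change is that the maximal inequality is taken over $p_n^2$ indices instead of $p_n$; but $\log(1+p_n^2)\le 2\log(1+p_n)=O(n^\zeta)$ by (\ref{p_n}), so the exponents governing the bounds are unchanged, every term tends to $0$ exactly as in Lemma \ref{E}, and the compensator and tail contributions are again of order $\Delta_n/(\gamma_n\eta)=\Delta_n^{\beta}\rightarrow 0$. Combining the three estimates and using $\gamma_n\rightarrow 0$ then yields $\epsilon_n\rightarrow^p 0$; the single step that must be checked rather than copied is this harmless doubling of the logarithmic factor in the high-dimensional maximal inequality.
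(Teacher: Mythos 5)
Your proof is correct and follows essentially the same route as the paper, which decomposes $V_n(0;\theta_0)-J_n$ into a $D_n$-type piece and an $E_n$-type piece and simply asserts that the arguments of Lemmas \ref{D} and \ref{E} carry over entrywise. You in fact supply two details the paper glosses over — the drift contribution $\beta_k^2+2\beta_k M_k$, which the paper's displayed formula for $V_n(0;\theta_0)$ silently omits, and the verification that taking the supremum over $p_n^2$ index pairs only doubles the logarithmic factor in the Orlicz-norm maximal inequality, which is harmless under (\ref{p_n}) — so no changes are needed.
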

\begin{proof}
It holds that 
\begin{align*}
V_n(0;\theta_0)
&= \frac{2}{n \Delta_n} \sum_{k=1}^n Z_{t_{k-1}^n}Z_{t_{k-1}^n}^T \exp(-2\theta_0^T Z_{t_{k-1}^n}) 
\left|\int_{t_{k-1}^n}^{t_k^n} \exp(\theta_0^T Z_s) dW_s\right|^2 \\
&=(I) + (II) + (III),
\end{align*}
where 
\begin{eqnarray*}
(I)
&=& \frac{2}{n \Delta_n} \sum_{k=1}^n Z_{t_{k-1}^n}Z_{t_{k-1}^n}^T
\left|\int_{t_{k-1}^n}^{t_k^n} \exp(\theta_0^T[Z_s-Z_{t_{k-1}^n}]) dW_s\right|^2 \\
&& \qquad \qquad
-Z_{t_{k-1}^n}Z_{t_{k-1}^n}^T|W_{t_k^n}-W_{t_{k-1}^n}|^2,
\end{eqnarray*}
\[(II)
= \frac{2}{n \Delta_n} \sum_{k=1}^n Z_{t_{k-1}^n}Z_{t_{k-1}^n}^T
\left\{
|W_{t_k^n} - W_{t_{k-1}^n}|^2 - \Delta_n
\right\},
\]
and
\[(III)
= \frac{2}{n \Delta_n} \sum_{k=1}^n Z_{t_{k-1}^n}Z_{t_{k-1}^n}^T \Delta_n = J_n.
\]
Using triangle inequality, we have that
\[\left\|V_n(0;\theta_0) - J_n \right\|_\infty
\leq \|(I)\|_\infty + \|(II)\|_\infty.\]
As well as the proof of Lemma \ref{D} and Lemma \ref{E}, 
we can prove that $\|(I)\|_\infty$ and $\|(II)\|_\infty$ are $o_p(1)$.
\qed
\end{proof}
The relationship between $\psi_n(0;\hat{\theta}_n)-\psi_n(0;\theta_0)$ and $V_n(0;\theta_0)$
are provided by the lemma below.
\begin{lem}\label{pollard}
Define that $I := [-2C\|\theta_0\|_1, 2C\|\theta_0\|_1]$,
\[g(x) :=
\begin{cases}
\frac{e^{2x}-1}{x} & (x \not = 0) \\
2 & (x=0)
\end{cases}\]
and $\nu := \min_{x \in I} g(x)$.
Then, it holds for $h := \theta_0 - \hat{\theta}_n$ that 
\[\frac{\nu}{2}h^T V_n(0;\theta_0) h \leq h^T [\psi_n(0;\hat{\theta}_n) - \psi_n(0;\theta_0)].\]
\end{lem}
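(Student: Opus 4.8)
The plan is to compute the bilinear form $h^{T}[\psi_n(0;\hat{\theta}_n)-\psi_n(0;\theta_0)]$ directly from the explicit expression for $\psi_n(0;\cdot)$ and to recognise it, term by term, as a nonnegative multiple of the corresponding term of $h^{T}V_n(0;\theta_0)h$. First I would note that in the difference $\psi_n(0;\hat{\theta}_n)-\psi_n(0;\theta_0)$ the summand $-Z_{t_{k-1}^n}\Delta_n$, which does not depend on the parameter, cancels; writing $h:=\theta_0-\hat{\theta}_n$, $u_k:=h^{T}Z_{t_{k-1}^n}$, and using $\hat{\theta}_n^{T}Z_{t_{k-1}^n}=\theta_0^{T}Z_{t_{k-1}^n}-u_k$, this yields
\[
h^{T}[\psi_n(0;\hat{\theta}_n)-\psi_n(0;\theta_0)]
=\frac{1}{n\Delta_n}\sum_{k=1}^n u_k\,(e^{2u_k}-1)\,\exp(-2\theta_0^{T}Z_{t_{k-1}^n})\,|X_{t_k^n}-X_{t_{k-1}^n}|^{2}.
\]
(Equivalently, since $\psi_n(0;\cdot)=\tfrac1n\dot l_n(0;\cdot)$ and $-V_n(0;\cdot)=\tfrac1n\ddot l_n(0;\cdot)$, one could write the left-hand side as $\int_0^1 h^{T}V_n(0;\theta_0-th)h\,dt$ and use $\int_0^1 e^{2tu_k}\,dt=(e^{2u_k}-1)/(2u_k)$, arriving at the same identity.)

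Next I would use the algebraic identity $u_k(e^{2u_k}-1)=u_k^{2}g(u_k)$, valid for every $u_k\in\mathbb{R}$ (including $u_k=0$), to rewrite the sum as $\frac{1}{n\Delta_n}\sum_k u_k^{2}g(u_k)\exp(-2\theta_0^{T}Z_{t_{k-1}^n})|X_{t_k^n}-X_{t_{k-1}^n}|^{2}$. Since $e^{2x}-1$ has the same sign as $x$, the function $g$ is strictly positive on all of $\mathbb{R}$, and, having a removable singularity at $0$, it is continuous; hence $\nu=\min_{x\in I}g(x)>0$ is attained on the compact interval $I$. Provided that $u_k\in I$ for every $k$, one has $g(u_k)\ge\nu$, and replacing $g(u_k)$ by $\nu$ in each summand gives
\[
h^{T}[\psi_n(0;\hat{\theta}_n)-\psi_n(0;\theta_0)]
\ge\frac{\nu}{n\Delta_n}\sum_{k=1}^n u_k^{2}\,\exp(-2\theta_0^{T}Z_{t_{k-1}^n})\,|X_{t_k^n}-X_{t_{k-1}^n}|^{2}
=\frac{\nu}{2}\,h^{T}V_n(0;\theta_0)\,h,
\]
the last equality being just $u_k^{2}=h^{T}Z_{t_{k-1}^n}Z_{t_{k-1}^n}^{T}h$ together with the definition of $V_n(0;\theta_0)$.

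It remains to check $u_k\in I$ for all $k$, and this is the only delicate point. By Assumption~\ref{regularity}$(ii)$, $|u_k|=|h^{T}Z_{t_{k-1}^n}|\le\|h\|_1\|Z_{t_{k-1}^n}\|_\infty\le C\|h\|_1\le C(\|\theta_0\|_1+\|\hat{\theta}_n\|_1)$, so it suffices that $\|\hat{\theta}_n\|_1\le\|\theta_0\|_1$; this holds because $\hat{\theta}_n$ minimises $\|\cdot\|_1$ over $\mathcal{C}_n$ and $\theta_0\in\mathcal{C}_n$ (for the tuning parameter used in the main theorems, the latter event has probability tending to $1$ by Lemma~\ref{grad}). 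On that event $|u_k|\le 2C\|\theta_0\|_1$, i.e.\ $u_k\in I$, which completes the argument. The computation of the first display is routine; the thing that needs care is precisely that $g$ must be minimised over a bounded set — on $\mathbb{R}$ its infimum is $0$ — which is why the $\ell_1$-optimality of the Dantzig selector is invoked to control $u_k$.
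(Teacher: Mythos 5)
Your proof is correct and follows essentially the same route as the paper: expand the difference of scores to get $\frac{1}{n\Delta_n}\sum_k u_k(e^{2u_k}-1)\exp(-2\theta_0^T Z_{t_{k-1}^n})|X_{t_k^n}-X_{t_{k-1}^n}|^2$ with $u_k=h^TZ_{t_{k-1}^n}$, then bound $x(e^{2x}-1)\ge\nu x^2$ on $I$. Your explicit justification that $u_k\in I$ (via $\|\hat{\theta}_n\|_1\le\|\theta_0\|_1$ on the event $\theta_0\in\mathcal{C}_n$) is a point the paper's proof of the lemma leaves implicit and only supplies later in the proof of Theorem \ref{re}, so it is a welcome addition rather than a deviation.
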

\begin{proof}
We have that
\begin{eqnarray*}
h^T[\psi_n(0;\hat{\theta}_n) - \psi_n(0;\theta_0)]
&=& \frac{1}{n \Delta_n} \sum_{k=1}^n h^T Z_{t_{k-1}^n} \exp(-2\theta_0^T Z_{t_{k-1}^n})|X_{t_k^n}-X_{t_{k-1}^n}|^2 \\
&& \qquad \qquad \qquad \qquad \times
\{\exp(2h^TZ_{t_{k-1}^n})-1\} 
\end{eqnarray*}
Note that $h^T Z_{t_{k-1}^n} \in I$ for all $k=1,2,\ldots,n$. 
Noting moreover that $x(e^{2x}-1) \geq \nu x^2$, we can see that 
\begin{align*}
h^T[\psi_n(0;\hat{\theta}_n) - \psi_n(0;\theta_0)]
&\geq
\frac{1}{n \Delta_n} \sum_{k=1}^n \exp(-2\theta_0^T Z_{t_{k-1}^n})|X_{t_k^n}-X_{t_{k-1}^n}|^2 
(\nu h^T Z_{t_{k-1}^n})^2 \\
&=\frac{\nu}{2} h^T V_n(0;\theta_0) h.
\end{align*}
We thus obtain the conclusion.
\qed
\end{proof}
\subsection{Proofs of main results}
Now, we are ready to prove our main results. 
\begin{proof}[Proof of Theorem \ref{re}]
It is sufficient to prove that $\|\psi_n(0;\theta_0)\|_\infty \leq \gamma_n$ implies that 
\[\|\hat{\theta}_n - \theta_0\|^2_2 \leq \frac{K_2 \gamma_n + K_3 \epsilon_n}{RE^2(T_0;J_n)}. \]
By the construction of the estimator $\hat{\theta}_n$, we have $\|\psi_n(0;\hat{\theta}_n)\|_\infty \leq \gamma_n$, which implies 
that 
\[\|\psi_n(0;\hat{\theta}_n) - \psi_n(0;\theta_0)\|_\infty \leq \|\psi_n(0;\hat{\theta}_n)\|_\infty + 
\|\psi_n(0;\theta_0)\|_\infty \leq 2\gamma_n . \]
Put $h := \theta_0 - \hat{\theta}_n$, then we have that $h \in C_{T_0}$ since it holds that
\begin{align*}
0 \geq \|\theta_0 - h\|_1 - \|\theta_0\|_1 &= \sum \limits_{j \in T_0^c} |h_{T^c_{0j}}| + \sum \limits_{j \in T_0}
 (|\theta_{0j}-h_{T_{0j}}| - |\theta_{0j}|)\\
 &\geq \sum \limits_{j \in T_0^c} |h_{T_{0j}^c}| -  \sum \limits_{j \in T_0} |h_{T_{0j}}| \\
 &= \|h_{T_0^c}\|_1 - \|h_{T_0}\|_1.
\end{align*}
Notice moreover that $\|h\|_1 \leq \|\hat{\theta}_n\|_1 + \|\theta_0\|_1 \leq 2 \|\theta_0\|_1$ by
the definition of $\hat{\theta}_n$.
Now, we use Lemma $4.3$ for $h$ to deduce that
\begin{align*}
h^T V_n(0;\theta_0)h 
&\leq \frac{2}{\nu} h^T [\psi_n(0;\hat{\theta}_n) - \psi_n(0;\theta_0)] \\
&\leq \frac{4}{\nu}\gamma_n \|h\|_1 \\
&\leq \frac{8}{\nu}\gamma_n \|\theta_0\|_1 \\
&=: K_2 \gamma_n.
\end{align*}
Thus it holds that
\begin{align*}
h^T J_n h 
&\leq |h^T (J_n - V_n(0;\theta_0))h| + h^T V_n(0;\theta_0) h \\
&\leq \epsilon_n \|h\|_1^2 + K_2 \gamma_n \\
&\leq \epsilon_n \cdot 4 \|\theta_0\|_1^2 + K_2 \gamma_n\\
&=:K_3 \epsilon_n + K_2 \gamma_n.
\end{align*}
By the definition of the restricted eigenvalue, we have that 
\begin{align*}
RE^2(T_0;J_n)
&\leq \frac{h^T J_n h}{\|\hat{\theta}_n-\theta_0\|_2^2} \\
&\leq \frac{K_2 \gamma_n + K_3 \epsilon_n}{\|\hat{\theta}_n-\theta_0\|_2^2}.
\end{align*}
Noting that $RE^2(T_0;J_n) >0$ with large probability when $n$ is sufficiently large, 
we obtain that 
\[\|\hat{\theta}_n - \theta_0\|_2^2 \leq \frac{K_2 \gamma_n + K_3 \epsilon_n}{RE^2(T_0;J_n)},\]
which yields the conclusion in (i). 
Using the factor $F_\infty(T_0;J_n)$, we obtain the conclusion in (ii) by the similar way.
\qed
\end{proof}
\begin{proof}[Proof of Theorem \ref{comp}]
It follows from the proof of Theorem \ref{re} that 
\[h^T V_n(0;\theta_0) h \leq K_4 \gamma_n \|\hat{\theta}_n-\theta_0\|_1.\]
Noting that $\|b\|^2_2 \leq \|b\|^2_1$ for all $b \in \mathbb{R}^{p_n}$, we have that
\[
h^T J_n h
\leq \epsilon_n \|\hat{\theta}_n-\theta_0\|^2_1 + K_4 \gamma_n \|\hat{\theta}_n - \theta_0\|_1.
\]
The definition of $\kappa(T_0;J_n)$ implies that 
\begin{align*}
\kappa^2(T_0;J_n)
&\leq \frac{S h^T J_nh}{\|h_{T_0}\|^2_1} \\
&\leq \frac{S \epsilon_n \|h\|^2_1 + K_4 S \gamma_n \|h\|_1}{\|h_{T_0}\|^2_1}.
\end{align*}
Since $\|h\|_1 \leq 2 \|h_{T_0}\|_1$, this yields the conclusion in (i). 

On the other hand, using the weak cone invertibility factor for every $q \geq 1$, we have that 
\[F_q (T_0;J_n) 
\leq  \frac{S^{\frac{1}{q}}\epsilon_n \|h\|^2_1 + S^{\frac{1}{q}}K_4 \gamma_n \|h\|_1}
{\|h_{T_0}\|_1 \|h\|_q},
\]
which implies that
\[
\|\hat{\theta}_n - \theta_0\|_q \leq
\frac{2S^{\frac{1}{q}}\epsilon_n \|\hat{\theta}_n-\theta_0\|_1 + 2S^{\frac{1}{q}}K_4 \gamma_n}{F_q(T_0;J_n)}.
\]
Using the $l_1$ bound derived above, we obtain the conclusion in (ii).
\qed
\end{proof}

\vskip 20pt
{\bf Acknowledgements.}
The second author's work was supported by Grant-in-Aid for Scientific Research (C), 15K00062, from Japan Society for the Promotion of Science.

\end{document}